\theoremstyle{plain}
\newtheorem{theorem}{Theorem}[section]
\newtheorem{lemma}[theorem]{Lemma}
\theoremstyle{definition}
\theoremstyle{remark}
\begin{document}

\title{Exponential decay rate of partial autocorrelation coefficients of ARMA and short-memory processes}
\author{
Akimichi Takemura\thanks{Graduate School of Information Science and Technology, University of Tokyo}
}
\date{January, 2016}
\maketitle

\begin{abstract}
We present a short proof of the fact that the exponential decay rate of partial autocorrelation coefficients of a short-memory process, in particular an ARMA process, 
is equal to the exponential decay rate of the coefficients of its infinite autoregressive representation.  
\end{abstract}


\section{Introduction}
The autocorrelation coefficients and the partial autocorrelation coefficients are basic tools for model selection in time series analysis based on ARMA models.  
For AR models, by the Yule-Walker equation, the autocorrelation coefficients satisfy a linear
difference equation with constant coefficients and hence the autocorrelation coefficients decay to zero exponentially with the rate of the reciprocal of the smallest absolute value of the roots of the characteristic polynomial of the AR model.  This also holds for ARMA models, because their
autocorrelations satisfy the same difference equation defined by their AR part, except for some
initial values. 

On the other hand, it seems that no clear statement and proof is given
in standard textbooks on time series analysis 
concerning the exponential decay rate of the partial autocorrelation coefficients for MA models and ARMA models. 
For example, in Section 3.4.2 of \cite{box-jenkins-5th} the following is stated  without clear indication of the decay rate.
\begin{quote}
Hence, the partial autocorrelation function of a mixed process is infinite in extent. It behaves eventually like the partial autocorrelation function of a pure moving average process, being dominated by a mixture of damped exponentials and/or damped sine waves, depending on the order of the moving average and the values of the parameters it contains.
\end{quote}
In Section 3.4 of \cite{brockwell-davis} the following is stated on MA($q$) processes
again without clear indication of the decay rate.
\begin{quote}
In contrast with the partial autocorrelation function of an AR($p$) process, that of an MA($q$) process
does not vanish for large lags.  It is however bounded in absolute value by a geometrically decreasing function.
\end{quote}

The purpose of this paper is to give a clear statement on the decay rate and its short proof.
Because of the duality between AR models and MA models, it is intuitively obvious that 
the partial autocorrelation coefficients of an ARMA model decay to zero at the rate of the reciprocal of the smallest absolute value of the roots of the characteristic polynomial of the MA part of the model.  
Note that this rate is also the decay rate of the coefficients of the AR($\infty$) representation.

In literature the  sharpest results on asymptotic behavior of partial autocorrelation functions
have been given by Akihiko Inoue and his collaborators (e.g.\ \cite{inoue-2002}, \cite{inoue-kasahara-2006}, \cite{inoue-2008}, \cite{bingham-etal-2012}).  They give detailed and deep results on
the polynomial decay rate of the partial autocorrelation coefficients for  the case of long-memory processes.
Concerning ARMA processes, the most clear result seems to have been given by Inoue in Section 7 of 
\cite{inoue-2008}.  However his result is one-sided, giving an upper bound on the exponential rate, whereas Theorem \ref{th:main} in this paper gives an equality.

\section{Main result and its proof}
We consider  a zero-mean causal and invertible 
weakly stationary process $\{X_t\}_{t\in {\mathbb Z}}$ having an  
AR($\infty$) representation and an MA($\infty$) representation given by
\begin{align}
\label{eq:ar-infty-representation}
X_t&=\pi_1 X_{t-1} +\pi_2 X_{t-2} + \dots + \epsilon_t, 
\quad \pi(B)X_t = \epsilon_t, \qquad
\sum_{i=1}^\infty |\pi_i| < \infty, 
\\
\label{eq:ma-infty-representation}
X_t&= \epsilon_t + \psi_1 \epsilon_{t-1} + \dots  = \psi(B)\epsilon_t, \qquad
\sum_{i=1}^\infty |\psi_i| < \infty.
\end{align}
For an ARMA($p,q$) process 
\[
\phi(B)X_t = \theta(B)\epsilon_t,
\]
$\pi_1, \pi_2,\dots,$ decay exponentially 
with the rate of the reciprocal of the smallest absolute value of the roots 
of $\theta(B)=0$ and similarly $\psi_1, \psi_2, \dots,$
decay, with $\theta(B)$ replaced by  $\phi(B)$.
%
The autocovariance function of $\{X_t\}$ is
\[
E(X_t X_{t+k})=\gamma_k = \gamma_{-k}=\sigma_\epsilon^2 (\psi_k + \sum_{i=1}^\infty \psi_{k+i}\psi_i), \quad k\ge 0,
\]
where $\sigma_\epsilon^2 = E(\epsilon_t^2)$.
Let $H$ denote the Hilbert space spanned by $\{ X_t\}$ and for a subset $I\subset {\mathbb Z}$ of integers, let $P_I$ denote the orthogonal projector onto the
subspace $H_I$ spanned by $\{ X_t\}_{t\in I}$. The $k$-th partial autocorrelation is defined 
by  $\phi_{kk}$ in
\[
P_{[t-k,t-1]} X_t = \phi_{k1} X_{t-1} + \dots + \phi_{kk} X_{t-k}.
\]

We state our theorem, which shows that the radius of convergence is common for
the infinite series with coefficients $\{\pi_n\}$ and coefficients $\{\phi_{nn}\}$.

\begin{theorem}
\label{th:main}
Let $\{X_t\}_{t\in {\mathbb Z}}$ be a zero-mean causal and invertible 
weakly stationary process with its
AR($\infty$) representation given by \eqref{eq:ar-infty-representation} and let $\phi_{nn}$ 
be the $n$-th partial autocorrelation coefficient.  Then
\begin{equation}
\label{eq:main-result}
\limsup_{n\rightarrow\infty} |\phi_{nn}|^{1/n}=
\limsup_{n\rightarrow\infty} |\pi_n|^{1/n} .
\end{equation}
\end{theorem}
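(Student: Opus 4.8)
The plan is to route both sides of \eqref{eq:main-result} through the finite prediction variances
\[
v_n:=\bigl\|X_t-P_{[t-n,t-1]}X_t\bigr\|^{2},\qquad v_0=\gamma_0,\quad v_n\downarrow\sigma_\epsilon^{2},
\]
by establishing the two equalities
\[
\limsup_{n\to\infty}(v_n-\sigma_\epsilon^{2})^{1/n}
=\Bigl(\limsup_{n\to\infty}|\phi_{nn}|^{1/n}\Bigr)^{2}
=\Bigl(\limsup_{n\to\infty}|\pi_n|^{1/n}\Bigr)^{2},
\]
which together give \eqref{eq:main-result}.

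First I would relate $v_n-\sigma_\epsilon^{2}$ to the partial autocorrelations. The Levinson--Durbin recursion $v_n=v_{n-1}(1-\phi_{nn}^{2})$ gives $v_{n-1}-v_n=\phi_{nn}^{2}v_{n-1}$, and summing this telescoping identity yields $v_{N-1}-\sigma_\epsilon^{2}=\sum_{n\ge N}\phi_{nn}^{2}v_{n-1}$. Since $0<\sigma_\epsilon^{2}\le v_{n-1}\le\gamma_0$, the sequences $v_{N-1}-\sigma_\epsilon^{2}$ and $\sum_{n\ge N}\phi_{nn}^{2}$ have a common exponential rate, equal to $(\limsup_n|\phi_{nn}|^{1/n})^{2}$; this is a routine $\limsup$ calculation using only $|\phi_{nn}|\le1$ and $\sum_n\phi_{nn}^{2}<\infty$ (the latter because $\prod_n(1-\phi_{nn}^{2})=\sigma_\epsilon^{2}/\gamma_0>0$).

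Next I would express $v_n-\sigma_\epsilon^{2}$ through the AR($\infty$) coefficients. Writing $g_n:=P_{(-\infty,t-1]}X_t-P_{[t-n,t-1]}X_t$, one has $v_n-\sigma_\epsilon^{2}=\|g_n\|^{2}$ by Pythagoras. Since $P_{(-\infty,t-1]}X_t=\sum_{k\ge1}\pi_kX_{t-k}$ and $X_{t-1},\dots,X_{t-n}\in H_{[t-n,t-1]}$, we get on one side $g_n=(I-P_{[t-n,t-1]})\sum_{k>n}\pi_kX_{t-k}$, whence $v_n-\sigma_\epsilon^{2}\le\gamma_0\bigl(\sum_{k>n}|\pi_k|\bigr)^{2}$ and therefore $\limsup_n(v_n-\sigma_\epsilon^{2})^{1/n}\le(\limsup_n|\pi_n|^{1/n})^{2}$; on the other side $g_n=\sum_{k\ge1}c_k^{(n)}X_{t-k}$ with $c_k^{(n)}=\pi_k-\phi_{nk}$ for $k\le n$ and $c_k^{(n)}=\pi_k$ for $k>n$. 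For the matching lower bound I would pass to the spectral density $f(\lambda)=\tfrac{\sigma_\epsilon^{2}}{2\pi}|\psi(e^{i\lambda})|^{2}$: since $\sum_{j,k}a_ja_k\gamma_{j-k}=\int_{-\pi}^{\pi}\bigl|\sum_k a_ke^{ik\lambda}\bigr|^{2}f(\lambda)\,d\lambda\ge2\pi\bigl(\min_\lambda f\bigr)\sum_k a_k^{2}$ for every real sequence $(a_k)$,
\[
v_n-\sigma_\epsilon^{2}=\|g_n\|^{2}=\sum_{j,k\ge1}c_j^{(n)}c_k^{(n)}\gamma_{j-k}
\;\ge\;2\pi\bigl(\min_\lambda f\bigr)\sum_{k\ge1}\bigl(c_k^{(n)}\bigr)^{2}
\;\ge\;2\pi\bigl(\min_\lambda f\bigr)\sum_{k>n}\pi_k^{2}.
\]
Evaluating at $n=m-1$ along a subsequence with $|\pi_m|^{1/m}\to\limsup_n|\pi_n|^{1/n}$ (and keeping only the $k=m$ term in the last sum) gives $\limsup_n(v_n-\sigma_\epsilon^{2})^{1/n}\ge(\limsup_n|\pi_n|^{1/n})^{2}$, and the two bounds finish the proof.

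I expect the lower bound on $v_n-\sigma_\epsilon^{2}$ to be the crux, and it is the one place where invertibility genuinely enters, via $\min_\lambda f(\lambda)>0$. A triangle inequality on $\|g_n\|$ would be too crude: along the subsequence realizing the $\limsup$ the leading coefficient $\pi_{n+1}$ can be far smaller than the remaining tail $\sum_{k>n+1}|\pi_k|$, so the cancellation among the $X_{t-k}$ must be controlled, which is exactly what the spectral estimate does. The positivity $\min_\lambda f>0$ holds because $\psi(e^{i\lambda})$ is continuous on the unit circle (as $\sum_i|\psi_i|<\infty$) and nonvanishing there, the latter from $\pi(z)\psi(z)\equiv1$ on $|z|\le1$ -- which one obtains by equating the coefficients of $\epsilon_{t-m}$ in \eqref{eq:ar-infty-representation} and \eqref{eq:ma-infty-representation} -- together with $\pi(z)=1-\sum_k\pi_kz^k$ being bounded on $|z|=1$ since $\sum_i|\pi_i|<\infty$. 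Everything else -- that $v_n\downarrow\sigma_\epsilon^{2}$, the spectral representation $\gamma_m=\int_{-\pi}^{\pi}e^{im\lambda}f(\lambda)\,d\lambda$, and the several comparisons of exponential rates -- is standard.
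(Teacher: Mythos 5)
Your proof is correct, and it takes a genuinely different route from the paper. The paper proves the two inequalities separately: for $\limsup|\phi_{nn}|^{1/n}\le\limsup|\pi_n|^{1/n}$ it projects the AR($\infty$) representation onto the finite past to get $\phi_{kk}=\pi_k+\sum_{h\ge1}\pi_{k+h}\phi^{(h)}_{k1}$ and bounds the $h$-step partial regression coefficients uniformly by $\sqrt{\gamma_0/\sigma_\epsilon^2}$; for the reverse inequality it iterates the Durbin--Levinson \emph{coefficient} recursion to obtain $\pi_n=\phi_{nn}-\sum_{h\ge1}\phi_{n+h,n+h}\phi_{n+h-1,h}$, again with a uniform bound on $\phi_{n+h-1,h}$ coming from minimality. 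You instead route everything through the single pivot $v_n-\sigma_\epsilon^2$, using the Durbin--Levinson \emph{variance} recursion on the PACF side and a two-sided spectral estimate on the AR side. Each step of yours checks out: the telescoping identity $v_{N-1}-\sigma_\epsilon^2=\sum_{n\ge N}\phi_{nn}^2v_{n-1}$ with $\sigma_\epsilon^2\le v_{n-1}\le\gamma_0$ does give the common rate $(\limsup|\phi_{nn}|^{1/n})^2$ (using $\sum_n\phi_{nn}^2<\infty$ from the convergent infinite product); the Pythagorean identity $v_n-\sigma_\epsilon^2=\|g_n\|^2$ and the bound $\|g_n\|\le\sqrt{\gamma_0}\sum_{k>n}|\pi_k|$ give the upper bound; and the lower bound via $\min_\lambda f>0$ is sound, since $\pi(z)\psi(z)\equiv1$ on the closed disk (absolute summability makes the Cauchy product legitimate) forces $|\psi(e^{i\lambda})|\ge1/\sup_\lambda|\pi(e^{i\lambda})|>0$. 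What each approach buys: yours isolates where invertibility genuinely enters (the single inequality $\min f>0$, which is exactly the ``minimality'' the paper invokes to bound a denominator in its second lemma), unifies both directions through one identity, and handles the boundary case $\limsup=1$ without the separate case analysis the paper needs in assembling its two one-sided lemmas; the paper's argument is more elementary in that it stays entirely within Hilbert-space projections and never passes to the spectral density. One small presentational point: your explicit formula $c_k^{(n)}=\pi_k-\phi_{nk}$ for $k\le n$ is never actually needed, since you discard those (nonnegative) terms from $\sum_k(c_k^{(n)})^2$ anyway.
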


By our assumptions, both  $\{\pi_n\}$ and $\{\phi_{nn}\}$ are bounded and 
hence we have $\limsup_{n\rightarrow\infty} |\phi_{nn}|^{1/n}\le 1$, 
$\limsup_{n\rightarrow\infty} |\pi_n|^{1/n}\le 1$.  
Note that \eqref{eq:main-result} only gives the exponential decay rates of $\pi_n$
and $\phi_{nn}$  and 
does not distinguish polynomial rates
since $(n^k)^{1/n} \rightarrow 1$ as $n\rightarrow\infty$ for any power of $n$.
Akihiko Inoue and his collaborators 
provided  detailed analyses of
the polynomial decay rate of $\phi_{nn}$  for 
the case of long-memory processes (e.g.\ \cite{inoue-2002}, \cite{inoue-kasahara-2006}, \cite{inoue-2008}, \cite{bingham-etal-2012}).

For proving  Theorem \ref{th:main} we present two  lemmas.

\begin{lemma}
\label{lem:ar-to-pacf}
Suppose $\limsup_{n\rightarrow\infty} |\pi_n|^{1/n} < 1$. Then
$\limsup_{n\rightarrow\infty} |\phi_{nn}|^{1/n}\le 
\limsup_{n\rightarrow\infty} |\pi_n|^{1/n}$.
\end{lemma}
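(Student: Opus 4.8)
\medskip
\noindent\emph{Sketch of the intended argument.}

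The plan is to work directly with the Hilbert-space definition of $\phi_{nn}$ and to substitute the AR($\infty$) expansion \eqref{eq:ar-infty-representation} into it. Write $R_n := X_{t-n} - P_{[t-n+1,t-1]} X_{t-n}$ for the residual of predicting $X_{t-n}$ from the intervening lags $X_{t-1},\dots,X_{t-n+1}$ (with $R_1 := X_{t-1}$). By the partial-regression (Frisch--Waugh) identity, the coefficient of $X_{t-n}$ in $P_{[t-n,t-1]}X_t$ is
\[
\phi_{nn} = \frac{\langle X_t,\,R_n\rangle}{\|R_n\|^2},
\]
where $\langle\cdot,\cdot\rangle$ and $\|\cdot\|$ denote the inner product and norm of $H$. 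Now insert $X_t = \epsilon_t + \sum_{i=1}^\infty \pi_i X_{t-i}$ and use continuity of the inner product. Since $\{X_t\}$ is causal and invertible, $\epsilon_t$ is orthogonal to $H_{(-\infty,t-1]} \supseteq H_{[t-n,t-1]} \ni R_n$, so $\langle \epsilon_t,R_n\rangle = 0$; by construction $R_n$ is orthogonal to $X_{t-1},\dots,X_{t-n+1}$, while $\langle X_{t-n},R_n\rangle = \|R_n\|^2$. Hence all terms with $i \le n-1$, together with the innovation term, drop out, and
\[
\langle X_t,R_n\rangle = \pi_n\|R_n\|^2 + \sum_{i=n+1}^\infty \pi_i\,\langle X_{t-i},R_n\rangle,
\qquad\text{so}\qquad
\phi_{nn}-\pi_n = \frac{1}{\|R_n\|^2}\sum_{i=n+1}^\infty \pi_i\,\langle X_{t-i},R_n\rangle.
\]

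Next I would turn this identity into a quantitative bound. By Cauchy--Schwarz and weak stationarity, $|\langle X_{t-i},R_n\rangle| \le \|X_{t-i}\|\,\|R_n\| = \sqrt{\gamma_0}\,\|R_n\|$. Moreover $\|R_n\|^2$ is the best-linear-prediction error variance of one value of the process given $n-1$ consecutive other values, and these error variances are non-increasing in $n$ and converge to the innovation variance $\sigma_\epsilon^2$, which is strictly positive; hence $\|R_n\|^2 \ge \sigma_\epsilon^2 > 0$ uniformly in $n$. Combining these estimates,
\[
|\phi_{nn}-\pi_n| \;\le\; \frac{\sqrt{\gamma_0}}{\|R_n\|}\sum_{i=n+1}^\infty|\pi_i| \;\le\; \frac{\sqrt{\gamma_0}}{\sigma_\epsilon}\sum_{i=n+1}^\infty|\pi_i| .
\]

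Finally I would feed in the hypothesis. Put $r := \limsup_{n\to\infty}|\pi_n|^{1/n} < 1$ and fix any $\rho\in(r,1)$. Then $|\pi_n|\le\rho^n$ for all large $n$, so $\sum_{i=n+1}^\infty|\pi_i| = O(\rho^n)$ and the previous display gives $|\phi_{nn}| \le |\pi_n| + O(\rho^n) = O(\rho^n)$; hence $\limsup_{n\to\infty}|\phi_{nn}|^{1/n}\le\rho$. Letting $\rho\downarrow r$ yields $\limsup_{n\to\infty}|\phi_{nn}|^{1/n}\le r$, which is the assertion of the lemma. The points I expect to need care are the two standard-but-not-immediate inputs: the partial-regression formula $\phi_{nn}=\langle X_t,R_n\rangle/\|R_n\|^2$ (equivalently, that $\phi_{nn}$ is the partial correlation of $X_t$ and $X_{t-n}$ given the intervening lags), and the uniform lower bound $\inf_n\|R_n\|^2 > 0$, which reduces to positivity of the innovation variance $\sigma_\epsilon^2$ --- itself guaranteed by the assumed AR($\infty$) representation. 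The orthogonality $\langle\epsilon_t,R_n\rangle=0$ should likewise be made explicit from causality and invertibility, i.e.\ $H_{(-\infty,t-1]}=\overline{\mathrm{span}}\{\epsilon_s:s\le t-1\}$ together with $\epsilon_t\perp\epsilon_s$ for $s<t$. Everything else is Cauchy--Schwarz and a geometric-series estimate.
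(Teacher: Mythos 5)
Your argument is correct and is essentially the paper's own proof in different notation: the identity $\phi_{nn}-\pi_n=\|R_n\|^{-2}\sum_{i>n}\pi_i\langle X_{t-i},R_n\rangle$ is exactly the paper's $\phi_{kk}=\pi_k+\sum_{h\ge1}\pi_{k+h}\phi^{(h)}_{k1}$, since $\phi^{(h)}_{k1}=\langle X_{t-k-h},R_k\rangle/\|R_k\|^2$, and your Cauchy--Schwarz bound together with $\|R_n\|^2\ge\sigma_\epsilon^2$ reproduces the paper's uniform bound $|\phi^{(h)}_{k1}|\le\sqrt{\gamma_0/\sigma_\epsilon^2}$. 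The only point to make explicit is that the lower bound $\|R_n\|^2\ge\sigma_\epsilon^2$ for this \emph{backward} prediction error uses the time-reversal symmetry $\gamma_k=\gamma_{-k}$, which the paper invokes at the corresponding step.
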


\begin{proof}
Let $\limsup_{n\rightarrow\infty} |\pi_n|^{1/n}=c_0 < 1$.
Then for every $c\in (c_0,1)$, there exist 
$n_0$ such that
\[
|\pi_n| < c^n, \qquad \forall n\ge n_0.
\]

We denote the  $h$-period ($h\ge 1$) ahead prediction by
\[
P_{[t-k,t-1]}X_{t+h-1}=\phi^{(h)}_{k1} X_{t-1} + \dots + \phi^{(h)}_{kk} X_{t-k} \qquad
(\phi^{(1)}_{kj}=\phi_{kj}).
\]
Here $\phi^{(h)}_{k1}$ is the partial regression coefficient of $X_{t-1}$ in regressing
$X_{t+h-1}$ to $X_{t-1},\dots, X_{t-k}$.  Hence it is written as
\[
\phi^{(h)}_{k1} = \frac{{\rm Cov}(P_{[t-k,t-2]}^\perp X_{t+h-1}, P_{[t-k,t-2]}^\perp 
X_{t-1})}{{\rm Var}(P_{[t-k,t-2]}^\perp X_{t-1})},
\]
where $P_{[t-k,t-2]}^\perp$ is the projector onto the orthogonal complement 
of $H_{[t-k,t-2]}$. 
Then
$|\phi^{(h)}_{k1}|$ is uniformly bounded from above as
\begin{align}
|\phi^{(h)}_{k1}| 
&\le \frac{\sqrt{{\rm Var}(P_{[t-k,t-2]}^\perp X_{t+h-1}) {\rm Var}(P_{[t-k,t-2]}^\perp X_{t-1})}}{{\rm Var}(P_{[t-k,t-2]}^\perp X_{t-1})}
\nonumber \\
&= \sqrt{\frac{{\rm Var}(P_{[t-k,t-2]}^\perp X_{t+h-1})}{{\rm Var}(P_{[t-k,t-2]}^\perp X_{t-1})}}\nonumber\\
&\le \sqrt{\frac{{\rm Var}(X_{t+h-1})}{{{\rm Var}(P_{(-\infty,t-2]}^\perp X_{t-1})}}}
= \sqrt{\frac{\gamma_0}{\sigma_\epsilon^2}}.
\label{eq:phi-h-bounded}
\end{align}

In \eqref{eq:ar-infty-representation} we apply $P_{[t-k,t-1]}$ to $X_t$. Then
\begin{align}
\phi_{k1} X_{t-1} + \dots + \phi_{kk} X_{t-k} & = P_{[t-k,t-1]} X_t \nonumber \\
&=  P_{[t-k,t-1]} P_{(-\infty,t-1]} X_t \nonumber \\
&= P_{[t-k,t-1]} (\sum_{l=1}^\infty \pi_l X_{t-l})\nonumber \\
&= \pi_1 X_{t-1} + \dots + \pi_k X_{t-k}
+\sum_{l=k+1}^\infty \pi_l P_{[t-k,t-1]} X_{t-l} .
\label{eq:projection-2}
\end{align}
Now by time reversibility of the covariance structure of weakly stationary processes
we have
\[
P_{[t-k,t-1]} X_{t-k-h}=\phi^{(h)}_{k1} X_{t-k} + \dots + \phi^{(h)}_{kk} X_{t-1}.
\]
By substituting this into \eqref{eq:projection-2}
and considering the coefficient of $X_{t-k}$ we have
\[
\phi_{kk} = \pi_k + \sum_{h=1}^\infty \pi_{k+h}\phi^{(h)}_{k1} ,
\]
where the right-hand side converges absolutely under our assumptions.
Then
\[
|\phi_{kk}| \le  |\pi_k| + \sum_{h=1}^\infty |\pi_{k+h}||\phi^{(h)}_{k1}|.
\]

For $k\ge n_0$, in view of \eqref{eq:phi-h-bounded}, 
the right-hand side is bounded as
\[
|\phi_{kk}| \le c^k  (1  + \sum_{h=1}^\infty c^h \sqrt{\gamma_0/\sigma_\epsilon^2}) =
c^k  (1+ \frac{c \sqrt{\gamma_0/\sigma_\epsilon^2}}{1-c}).
\]
Then 
\[
\limsup_{k\rightarrow\infty} |\phi_{kk}|^{1/k} \le c.
\]

Since $c>c_0$ was arbitrary, we let $c\downarrow c_0$ and obtain
\[
\limsup_{n\rightarrow\infty} |\phi_{nn}|^{1/n} \le c_0 
= \limsup_{n\rightarrow\infty}|\pi_n|^{1/n}.
\]
\end{proof}

\begin{lemma}
\label{lem:pacf-to-ar}
Suppose  $\limsup_{n\rightarrow\infty} |\phi_{nn}|^{1/n} < 1$.
Then
$
\limsup_{n\rightarrow\infty} |\pi_n|^{1/n}
\le 
\limsup_{n\rightarrow\infty} |\phi_{nn}|^{1/n}.
$
\end{lemma}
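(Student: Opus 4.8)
The plan is to exploit the Durbin–Levinson recursion, which expresses the AR($\infty$) coefficients as limits of finite‑order AR coefficients, and to track how the partial autocorrelations $\phi_{nn}$ enter that recursion. Recall that if $\phi_{k1},\dots,\phi_{kk}$ are the coefficients of the best linear predictor of $X_t$ on $X_{t-1},\dots,X_{t-k}$, then the Durbin–Levinson algorithm updates them by
\[
\phi_{k,j} = \phi_{k-1,j} - \phi_{kk}\,\phi_{k-1,k-j}, \qquad j=1,\dots,k-1,
\]
with the new entry being $\phi_{kk}$ itself. Under causality and invertibility one has $\phi_{k,j}\to\pi_j$ as $k\to\infty$ for each fixed $j$; more usefully, by telescoping the recursion one can write, for each fixed $j$,
\[
\pi_j = \phi_{jj} - \sum_{k>j} \phi_{kk}\,\phi_{k-1,k-j},
\]
provided the series converges. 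So the first step is to set $c_0 := \limsup_n |\phi_{nn}|^{1/n} < 1$, fix $c\in(c_0,1)$, choose $n_0$ with $|\phi_{nn}| < c^n$ for all $n\ge n_0$, and then bound $|\pi_j|$ using the displayed identity.

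The key estimate needed is a uniform (in $k$) bound on the intermediate Durbin–Levinson coefficients $|\phi_{k,k-j}|$, analogous to the bound \eqref{eq:phi-h-bounded} used in Lemma \ref{lem:ar-to-pacf}. Here one can argue probabilistically: $\phi_{k,k-j}$ is (up to the time‑reversal symmetry already invoked in the previous proof) a partial regression coefficient of the form $\mathrm{Cov}(P^\perp X, P^\perp X')/\mathrm{Var}(P^\perp X')$, and by Cauchy–Schwarz together with the monotonicity of variances under larger conditioning sets, each such coefficient is bounded in absolute value by $\sqrt{\gamma_0/\sigma_\epsilon^2}$, a constant independent of $k$ and $j$. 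Granting this, for $j\ge n_0$ the telescoped identity gives
\[
|\pi_j| \le |\phi_{jj}| + \sum_{k>j}|\phi_{kk}|\,|\phi_{k-1,k-j}|
\le c^j + \sqrt{\gamma_0/\sigma_\epsilon^2}\sum_{k>j} c^{k}
= c^j\Bigl(1 + \frac{c\sqrt{\gamma_0/\sigma_\epsilon^2}}{1-c}\Bigr),
\]
whence $\limsup_j |\pi_j|^{1/j} \le c$; letting $c\downarrow c_0$ finishes the argument. Notice this mirrors the bound obtained in the proof of Lemma \ref{lem:ar-to-pacf}, so the two lemmas together with the trivial inequalities stated after the theorem immediately yield \eqref{eq:main-result}.

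The main obstacle I anticipate is establishing the telescoped identity $\pi_j = \phi_{jj} - \sum_{k>j}\phi_{kk}\phi_{k-1,k-j}$ with proper justification of convergence, and in particular confirming that the "$\phi_{k-1,k-j}$'' appearing there really does admit the uniform bound $\sqrt{\gamma_0/\sigma_\epsilon^2}$ — the time‑reversal identity used in Lemma \ref{lem:ar-to-pacf} identifies the Durbin–Levinson coefficients $\phi^{(h)}_{k1}$ with forward‑prediction coefficients, and one must check the same device applies to the index $k-j$ rather than just to $1$. An alternative route that sidesteps the Durbin–Levinson bookkeeping is to run essentially the argument of Lemma \ref{lem:ar-to-pacf} in reverse: apply $P_{[t-k,t-1]}$ to the MA($\infty$) side instead, or work with the invertibility relation $\pi(B)\psi(B)=1$ to express $\pi_j$ through the $\psi$'s and then relate the $\psi$'s to the $\phi_{nn}$'s via the Hilbert‑space geometry. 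I would first try the Durbin–Levinson telescoping since it most directly produces the one remaining inequality in the symmetric form already used above.
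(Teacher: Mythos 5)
Your overall strategy is exactly the paper's: telescope the Durbin--Levinson recursion in $j$ fixed, use $\phi_{k,j}\to\pi_j$ to get $\pi_j=\phi_{jj}-\sum_{k>j}\phi_{kk}\phi_{k-1,k-j}$, bound the intermediate coefficients uniformly, and sum the geometric series. However, the one nontrivial analytic step --- the uniform bound on $|\phi_{k-1,k-j}|$ --- is wrong as you state it, and the error is precisely at the point you yourself flagged as needing checking. Writing $k-1=n+h-1$ and $k-j=h$, the coefficient $\phi_{n+h-1,h}$ is the partial regression coefficient of $X_{t-h}$ in the regression of $X_t$ on $X_{t-1},\dots,X_{t-(n+h-1)}$. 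For $h\ge 2$ the variable $X_{t-h}$ is \emph{interior} to the conditioning set: the variables being projected out include $X_{t-h+1},\dots,X_{t-1}$, which lie in the future of $X_{t-h}$. Consequently the denominator ${\rm Var}(Q^\perp X_{t-h})$ in the partial-regression formula is an \emph{interpolation}-type residual variance, and monotonicity of residual variances under enlarging the conditioning set only gives the lower bound ${\rm Var}(P^\perp_{(-\infty,-1]\cup[1,\infty)}X_0)$, the two-sided interpolation error variance $\tau^2$ --- not $\sigma_\epsilon^2$. Indeed $\tau^2\le\sigma_\epsilon^2$ with strict inequality in general (for an AR(1) with coefficient $\phi$ one has $\tau^2=\sigma_\epsilon^2/(1+\phi^2)$), so your claimed bound $\sqrt{\gamma_0/\sigma_\epsilon^2}$ does not follow from the monotonicity argument: the one-sided past $(-\infty,t-h-1]$ is not a superset of the finite two-sided conditioning set, so the comparison you invoke goes the wrong way. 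This is different from the situation in Lemma \ref{lem:ar-to-pacf}, where the coefficient being bounded is that of the \emph{most recent} regressor and the conditioning set lies entirely in its past, which is why $\sigma_\epsilon^2$ works there.

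The gap is fixable and does not change the architecture of the proof: any positive constant independent of $k$ and $j$ suffices, and the correct one is $\sqrt{\gamma_0/\tau^2}$. But you must then justify $\tau^2>0$, which is not free --- it is the statement that the process is \emph{minimal}, and this is exactly the extra hypothesis-verification the paper supplies (via Theorem 8.11 of \cite{pourahmadi} and \cite{jewell-1983}) under the standing causality/invertibility assumptions. As written, your proof both uses an invalid constant and silently skips the minimality issue, so the key estimate would fail; with the denominator replaced by the interpolation error variance and minimality invoked, the rest of your argument (the choice of $c\in(c_0,1)$, the geometric summation, and letting $c\downarrow c_0$) is correct and coincides with the paper's.
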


\begin{proof}
This follows from the Durbin-Levinson algorithm.  Consider $j=n$ in
\begin{equation}
\label{eq:d-l}
\phi_{n+1,j}=\phi_{n,j} - \phi_{n+1,n+1} \phi_{n,n-j+1}, \qquad j=1,2,\dots,n.
\end{equation}
The initial value is 
\[
\phi_{n+1,n}=\phi_{n,n} - \phi_{n+1,n+1} \phi_{n,1}.
\]
Using \eqref{eq:d-l} for $n$ replaced by $n+1$, $j=n$, and substituting the initial value, we obtain
\begin{align*}
\phi_{n+2,n}&=\phi_{n+1,n} - \phi_{n+2,n+2} \phi_{n+1,2} \\
&= \phi_{n,n} - \phi_{n+1,n+1} \phi_{n,1} - \phi_{n+2,n+2} \phi_{n+1,2}.
\end{align*} 
Repeating the substitution, we have
\[
\phi_{n+h,n}=\phi_{n,n} - \phi_{n+1,n+1} \phi_{n,1} - \dots
-\phi_{n+h,n+h} \phi_{n+h-1,h}.
\]
As $h\rightarrow\infty$, the left-hand side converges to $\pi_n$  (cf.\ Theorem 7.14 of \cite{pourahmadi}).
Hence
\[
\pi_n = \phi_{n,n} - \sum_{h=1}^\infty \phi_{n+h,n+h} \phi_{n+h-1,h}
\]
and 
\[
|\pi_n| \le  |\phi_{n,n}| +  \sum_{h=1}^\infty |\phi_{n+h,n+h}| |\phi_{n+h-1,h}|
\]

Now arguing as in \eqref{eq:phi-h-bounded}, we see that $|\phi_{n+h-1,h}|$ is uniformly bounded as
\[
|\phi_{n+h-1,h}| \le \sqrt{\frac{\gamma_0}{{\rm Var}(P^\perp_{(-\infty,t-1]\cup [t+1,\infty)}X_t)}}=\sqrt{\frac{\gamma_0}{{\rm Var}(P^\perp_{(-\infty,-1]\cup [1,\infty)}X_0)}} .
\]
Here the denominator is positive, because under our assumptions $\{X_t\}$ is ``minimal''
(cf.\ Theorem 8.11 of \cite{pourahmadi}, \cite[Section 2]{jewell-1983}).
The rest of the proof is the same as in the proof of Lemma \ref{lem:ar-to-pacf}
\end{proof}

By the above two lemmas, Theorem \ref{th:main} is proved as follows.

\begin{proof}[Proof of Theorem \ref{th:main}]
As noted after Theorem \ref{th:main}, both 
$\limsup_{n\rightarrow\infty} |\phi_{nn}|^{1/n}$ and
$\limsup_{n\rightarrow\infty} |\pi_n|^{1/n}$ are less than or equal to 1.  If
\[
\limsup_{n\rightarrow\infty} |\phi_{nn}|^{1/n}<  1  \ \ \text{or }\  \ 
\limsup_{n\rightarrow\infty} |\pi_n|^{1/n} < 1, 
\]
then by the above two lemmas both of them have to be less than one and 
they have to be equal.
The only remaining case is that they are equal to 1.
\end{proof}


\noindent
{\bf Acknowledgment}  \quad
This research is supported by JSPS Grant-in-Aid for Scientific Research No. 25220001.
The author is grateful for useful comments by Tomonari Sei.

\bibliographystyle{abbrv}
\bibliography{pacf}

\begin{thebibliography}{1}

\bibitem{bingham-etal-2012}
N.~H. Bingham, A.~Inoue, and Y.~Kasahara.
\newblock An explicit representation of {V}erblunsky coefficients.
\newblock {\em Statist. Probab. Lett.}, 82(2):403--410, 2012.

\bibitem{box-jenkins-5th}
G.~E.~P. Box, G.~M. Jenkins, G.~C. Reinsel, and G.~M. Ljung.
\newblock {\em Time Series Analysis: Forecasting and Control}.
\newblock Wiley Series in Probability and Statistics. John Wiley \& Sons, Inc.,
  Hoboken, NJ, fifth edition, 2015.

\bibitem{brockwell-davis}
P.~J. Brockwell and R.~A. Davis.
\newblock {\em Time Series: Theory and Methods}.
\newblock Springer Series in Statistics. Springer-Verlag, New York, second
  edition, 1991.

\bibitem{inoue-2002}
A.~Inoue.
\newblock Asymptotic behavior for partial autocorrelation functions of
  fractional {ARIMA} processes.
\newblock {\em Ann. Appl. Probab.}, 12(4):1471--1491, 2002.

\bibitem{inoue-2008}
A.~Inoue.
\newblock A{R} and {MA} representation of partial autocorrelation functions,
  with applications.
\newblock {\em Probab. Theory Related Fields}, 140(3-4):523--551, 2008.

\bibitem{inoue-kasahara-2006}
A.~Inoue and Y.~Kasahara.
\newblock Explicit representation of finite predictor coefficients and its
  applications.
\newblock {\em Ann. Statist.}, 34(2):973--993, 2006.

\bibitem{jewell-1983}
N.~P. Jewell and P.~Bloomfield.
\newblock Canonical correlations of past and future for time series:
  definitions and theory.
\newblock {\em Ann. Statist.}, 11(3):837--847, 1983.

\bibitem{pourahmadi}
M.~Pourahmadi.
\newblock {\em Foundations of Time Series Analysis and Prediction Theory}.
\newblock Wiley Series in Probability and Statistics: Applied Probability and
  Statistics. Wiley-Interscience, New York, 2001.

\end{thebibliography}

\end{document}